\renewcommand{\H}{\mathbb{H}}
\newcommand{\N}{\mathbb{N}}
\newcommand{\R}{\mathbb{R}}
\newcommand{\cH}{\mathcal{H}}
\newcommand{\cL}{\mathcal{L}}
\newcommand{\diam}{\mbox{diam}}
\newcommand{\ep}{\varepsilon}
\newcommand{\ph}{\varphi}
\newcommand{\sm}{\setminus}
\newcommand{\ra}{\rightarrow}
\newcommand{\lra}{\longrightarrow}
\newtheorem{The}{Theorem}
\newtheorem{Lem}{Lemma}
\newtheorem{Rem}{Remark}
\newtheorem{Def}{Definition}
\newtheorem{Exa}{Example}
\begin{document}

\title
[An area formula in metric spaces]
{{\bf An area formula in metric spaces}}
\author{Valentino Magnani}
\address{Valentino Magnani, Dipartimento di Matematica \\
L.go Pontecorvo 5, I-56127, Pisa}
\email{magnani@dm.unipi.it}

\begin{abstract}
We present an area formula for continuous mappings between metric spaces,
under minimal regularity assumptions. In particular, we do not require 
any notion of differentiability. This is a consequence of a measure 
theoretic notion of Jacobian, defined as the density of a suitable 
``pull-back measure''.
\end{abstract}

\maketitle

\medskip
\bigskip

Let $(X,d,\mu)$ and $(Y,\rho,\nu)$ be two metric measure spaces,
where $\mu$ is a Borel regular measure on $X$ and $\nu$ is a Borel measure on $Y$.
The terminology ``measure'' refers to a countably subadditive 
nonnegative set function, see 2.1.2 of \cite{Fed}. 
We also assume that $\mu$ is finite on bounded sets
and that there exists a {\em $\mu$ Vitali relation} $V$, 2.8.16 of \cite{Fed}.

The first point is the notion of ``pull-back measure''
with respect to a continuous mapping. To do this, we 
need the following important result, proved in 2.2.13 of \cite{Fed}. 
{\em Let $X$ be a complete and separable metric space and let 
$g:X\lra Y$ be continuous. Then for every Borel set $B\subset X$,
we have that $g(B)$ is $\nu$-measurable.}

Throughout, the above assuptions will constitute our underlying assumptions.
\begin{Def}[Pull-back measure]\label{pullb}\rm
Let $(X,d)$ be complete and separable, let $E\subset X$ be closed and 
let $f:E\lra Y$ be continuous. For each $S\subset E$, we set
$\zeta(S)=\nu\big(f(S)\big)$.
We denote by $f^*\nu$ the measure arising from the Caratheodory's construction
applied with $\zeta$ defined on the family of Borel sets, according to 2.10.1 of \cite{Fed}. 
We say that $f^*\nu$ is the {\em pull-back measure} of $\nu$ with respect to $f$.
The measure $f^*\nu$ is automatically extended to the whole of $X$, by setting
$f^*\nu(A)=f^*\nu(A\cap E)$ for any $A\subset X$.
\end{Def}
In the sequel, $E$ will stand for any closed subset of $X$.
Notice that $f^*\nu$ is a Borel regular measure on $E$, as it follows by the Carath\'eodory construction.

%
%
Recall that the {\em multiplicity function} of $f:E\lra Y$ relative to $A$ is defined 
as $N(f,A,y)=\#\big(A\cap f^{-1}(y)\big)$ for all $y\in Y$.
For any Borel set $T\subset E$, Theorem 2.10.10 of \cite{Fed} gives us the formula
\begin{equation}\label{Borelf}
f^*\nu(T)=\int_YN(f,T,y)\,d\nu(y)\,.
\end{equation}
\footnoterule{
The author has been supported by ''ERC ADG Grant GeMeTneES'' \\
{\em Mathematics Subject Classification}: 28A75  \\
{\em Keywords:} area formula, metric spaces}
\begin{Rem}\rm
It is important to notice that when $f^*\nu$ is absolutely continuous with respect to $\mu$
and finite on bounded sets, standard arguments show that 
\begin{equation}\label{extends}
f^*\nu(A)=\int_YN(f,A,y)\,d\nu(y)
\end{equation}
for any $\mu$-measurable set $A\subset E$, extending \eqref{Borelf} to $\mu$-measurable sets.
\end{Rem}
We are now lead to two notions of metric Jacobian.
%
%
%
%
%
\begin{Def}[Metric Jacobian]\label{jacmetI}{\rm
Let $f:E\lra Y$ be continuous and let $x\in E$.
Then we introduce two {\em metric Jacobians} of $f$ at $x$ as follows 
\begin{equation}\label{jac}
J_f(x)=(V)\limsup_{S\to x}\frac{\nu\big(f(S\cap E)\big)}{\mu(S)}\quad\mbox{and}
\quad Jf(x)=(V)\limsup_{S\to x}\frac{f^*\nu(S)}{\mu(S)}\,.
\end{equation}
}\end{Def}
\noindent
From 2.8.16 of \cite{Fed}, for each $\overline{\R}$-valued function $\ph$
defined on a subset of $V$, we have
\[
(V)\limsup_{S\to x}\ph(S)=\lim_{\ep\to0^+} \sup\big\{\ph(S): 
(x,S)\in V,\,S\in\mbox{dmn}(\ph),\, \diam(S)<\ep \big\}\,,
\]
where $\mbox{dmn}(\ph)$ denotes the domain of $\ph$. It is understood that
$(V) \lim$ and $(V) \liminf$ are introduced in analogous way.

In the sequel, we will present in two distinct theorems the metric area formula 
under slightly different assumptions, that depend on the notion of metric Jacobian we use.
This essentially provides an {\em axiomatic approach} to the area formula 
in a metric setting, without appealing to any notion of differentiability. 
%
%
%
%
%
%
\begin{The}[Area formula I]\label{metareaI}
Let $f:E\lra Y$ be continuous and assume that the pull-back 
$f^*\nu$ is finite on bounded sets and absolutely continuous with respect to $\mu$.
Then $Jf$ is $\mu$-a.e. finite and for all $\mu$-measurable sets $A\subset E$, we have 
\begin{equation}\label{areametricI}
\int_A Jf(x) \, d\mu(x)=\int_Y N(f,A,y)\, d\nu(y)\,.
\end{equation}
\end{The}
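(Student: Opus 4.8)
The plan is to read $Jf$ as the Radon--Nikodym density of the pull-back measure $f^*\nu$ with respect to $\mu$, taken through the Vitali relation $V$, and then to feed in the multiplicity formula of the preceding remark. The proof is thus an application of the differentiation theory for measures associated with a $\mu$ Vitali relation, and no notion of differentiability of $f$ itself ever enters.

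First I would record that $f^*\nu$ satisfies exactly the structural hypotheses that this theory requires. By Definition~\ref{pullb} and the comment following it, $f^*\nu$ is a Borel regular measure on $E$; by assumption it is finite on bounded sets and absolutely continuous with respect to $\mu$. These are precisely the conditions under which Federer's differentiation theorem through a $\mu$ Vitali relation, 2.9.7 of \cite{Fed}, can be invoked with the measure $f^*\nu$ in the role of the measure to be differentiated.

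The central step is then this differentiation theorem. Since $V$ is a $\mu$ Vitali relation and $f^*\nu\ll\mu$ is finite on bounded sets, the symmetric derivative
\[
(V)\lim_{S\to x}\frac{f^*\nu(S)}{\mu(S)}
\]
exists and is finite for $\mu$-almost every $x\in E$, coincides $\mu$-a.e.\ with the density of $f^*\nu$ relative to $\mu$, and satisfies
\[
\int_A (V)\lim_{S\to x}\frac{f^*\nu(S)}{\mu(S)}\,d\mu(x)=f^*\nu(A)
\]
for every $\mu$-measurable $A\subset E$. Because the limit exists $\mu$-a.e., the upper limit $Jf(x)$ of \eqref{jac} agrees with it $\mu$-a.e.; hence $Jf$ is $\mu$-a.e.\ finite and $\int_A Jf\,d\mu=f^*\nu(A)$.

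To finish I would combine this with the extended multiplicity formula \eqref{extends}, valid under the very same hypotheses of absolute continuity and finiteness on bounded sets, which gives $f^*\nu(A)=\int_Y N(f,A,y)\,d\nu(y)$ for every $\mu$-measurable $A\subset E$; chaining the two identities produces \eqref{areametricI}. The one point requiring genuine care is the matching of hypotheses in the differentiation theorem: it is the absolute continuity $f^*\nu\ll\mu$, together with local finiteness, that forces any singular part of the derivative to vanish, so that the $(V)\limsup$ and $(V)\liminf$ collapse to the common density $\mu$-a.e. Without absolute continuity one would retain a singular contribution and obtain only an inequality in \eqref{areametricI}, which is exactly why this assumption is built into the statement.
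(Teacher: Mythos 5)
Your proposal is correct and follows essentially the same route as the paper: invoke Federer's differentiation theorem 2.9.7 for the $\mu$ Vitali relation $V$ to identify $Jf$ with the density $\mathbf{D}(f^*\nu,\mu,V,\cdot)$ $\mu$-a.e.\ and obtain $\int_A Jf\,d\mu=f^*\nu(A)$, then conclude via the multiplicity formula \eqref{extends}. Your explicit remark that the a.e.\ existence of the $(V)\lim$ is what lets the $(V)\limsup$ in the definition of $Jf$ collapse to the density is a point the paper leaves implicit, but it is the same argument.
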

\begin{proof}
Under our assumptions, Theorem~2.9.7 of \cite{Fed} shows that any
$\mu$-measurable set $A\subset X$ is also $f^*\nu$-measurable and the integral formula
\[
f^*\nu(A)=\int_A {\bf D}(f^*\nu,\mu,V,x) \,d\mu(x)
\]
holds, where ${\bf D}(f^*\nu,\mu,V,x)$ is the density of $f^*\nu$ with respect to $\mu$
and the Vitali relation $V$, see 2.9.1 of \cite{Fed}.
By definition of metric Jacobian, for any $\mu$-measurable set $A\subset E$, 
we have $f^*\nu(A)=\int_A Jf(x)\;d\mu(x).$
Thus, formula \eqref{extends} concludes the proof.
\end{proof}
It should be apparent how in the previous theorem the regularity requirements on the
mapping $f$ are transfered to the pull-back measure $f^*\nu$.
These conditions on $f^*\nu$ are satisfied in all known contexts concerning the area
formula and represent the minimal regularity assumptions. 
For instance, they are clearly satisfied for mappings between stratified groups and
then also between Euclidean spaces, \cite{MagPhD}.

Another known metric context is that of Lipschitz mappings from subsets of $\R^n$ 
to metric spaces, equipped with $n$-dimensional Hausdorff measures. 
Here an area formula for Lipschitz mappings from Euclidean spaces to metric spaces 
has been established with different notions of metric Jacobian, \cite{Kir,AmbKir}.
In this framework an a.e. metric differentiability theorem is established 
for Lipschitz mappings and the metric Jacobians are clearly related to the so-called
{\em metric differential}.

In the following example, we wish to present a special context where no reasonable a.e. 
metric differentiability theorem holds. Nevertheless, our metric area formula \eqref{areametricI} holds, 
without referring to any differentiable structure.
\begin{Exa}\rm
Let us consider the identity $I:(\H^1,d)\lra (\H^1,\rho)$ of the Heisenberg group,
that has been constructed in \cite{KirMag2}. Here $d$ is a homogeneous distance of $\H^1$ and 
$\rho$ is a left invariant distance of $\H^1$ that is not homogeneous.
In the above mentioned work, it is proved that $I$ is 1-Lipschitz and {\em nowhere 
metrically differentiable}, according to the notion of \cite{Kir} extended
to the group setting. We have the maximal oscillations 
\begin{equation}\label{oscil}
\limsup_{t\to0^+}\frac{\rho\big(I(x\delta_tz),I(x)\big)}{d(x\delta_tz,x)}=1\quad
\mbox{and} \quad \liminf_{t\to0^+}\frac{\rho\big(I(x\delta_tz),I(x)\big)}{d(x\delta_tz,x)}=0\,.
\end{equation}
Let us equip $(\H^1,d)$ and $(\H^1,\rho)$ with the Hausdorff measure $\cH^4_d$ and
$\cH^4_\rho$, respectively. Since $\cH^4_d$ is doubling on $(\H^1,d)$, by Theorem~2.8.17 of
\cite{Fed}, the {\em covering relation} of closed balls $\{(x,D_{x,r}):x\in \H^1, r>0\}$ 
form an $\cH^4_d$ Vitali relation in $(\H^1,d)$. Furthermore, the injectivity of $I$
gives $f^*\cH^4_\rho(A)=\cH^4_\rho(A)\leq\cH^4_d(A)$ for any $\cH^4_d$-measurable set $A\subset\H^1$.
Clearly $f^*\cH^4_\rho$ satisfies the assumptions of Theorem~\ref{metareaI}, hence we have
\begin{equation}\label{areanodif}
\cH^4_\rho(A)=\int_A JI(x)\,\cH^4_d(x)
\end{equation}
where for all $x\in\H^1$, we have 
$$
JI(x)=J_I(x)=\limsup_{r\to0^+}\frac{\cH^4_\rho(D_{x,r})}{\cH^4_dd(D_{x,r})}=
\limsup_{r\to0^+}\frac{\cH^4_\rho(D_{0,r})}{\cH^4_dd(D_{0,r})}=c_0<+\infty\,.
$$ 
Then we have obtained $\cH^4_\rho=c_0\,\cH^4_d$ with $c_0\geq0$. If we knew that $\cH^4_\rho$
is positive on open sets, then the previous equality would also follow by uniqueness of the
Haar measure in a locally compact Lie group. This positivity of $\cH^4_\rho$ does
not seem a straightforward computation due to the strong oscillations of $\rho$
with respect to $d$, according to \eqref{oscil}. Notice that \eqref{areanodif}
does not refer to any notion of differentiability, although
it turns out to be simple a change of variable formula formula for two different measures.
\end{Exa}

The next lemma is a simple variant of Lemma 2.9.3 in \cite{Fed},
where we replace the Borel regularity of the measure $\zeta$ with the
absolute continuity with respect to $\mu$.
\begin{Lem}\label{differentationlemma}
Let $\zeta$ and $\mu$ be measures that are finite on bounded sets of $X$,
where $\zeta$ is absolutely continuous with respect to $\mu$.
Then for any $\alpha>0$ and any $\mu$-measurable set
$A\subset\big\{x\in X\,\Big|\, (V)\liminf_{S\ra x}\frac{\zeta(S)}{\mu(S)}<\alpha \big\}$,
we have $\zeta(A)\leq \alpha\,\mu(A)$.
\end{Lem}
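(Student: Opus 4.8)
The plan is to run a Vitali covering argument governed by the relation $V$, invoking the absolute continuity $\zeta\ll\mu$ at exactly the step where Lemma~2.9.3 of \cite{Fed} uses the Borel regularity of $\zeta$. We may assume $\mu(A)<+\infty$, since otherwise the inequality $\zeta(A)\le\alpha\,\mu(A)$ is trivial.

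First I would record the pointwise consequence of the hypothesis. For every $x\in A$ we have $(V)\liminf_{S\to x}\zeta(S)/\mu(S)<\alpha$, so by the very definition of $(V)\liminf$ the relevant infimum is below $\alpha$ for each scale; hence for each $\ep>0$ there exists $(x,S)\in V$ with $0<\mu(S)<+\infty$, $\diam(S)<\ep$ and $\zeta(S)<\alpha\,\mu(S)$. Fixing $\ep>0$ and using the outer regularity of $\mu$ (a Borel regular measure, finite on bounded sets, is outer regular by open sets on sets of finite measure), I would choose an open set $U\supset A$ with $\mu(U)\le\mu(A)+\ep$. The family
\[
C=\{(x,S)\in V:\ x\in A,\ S\subset U,\ 0<\mu(S)<+\infty,\ \zeta(S)<\alpha\,\mu(S)\}
\]
is then a subrelation of $V$ that is fine at every point of $A$: since $A\subset U$ and $U$ is open, every $x\in A$ admits such sets $S$ of arbitrarily small diameter still contained in $U$.

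Because $V$ is a $\mu$ Vitali relation, $C$ admits a countable disjoint subfamily $\{(x_i,S_i)\}_i$ with $\mu\big(A\setminus\bigcup_i S_i\big)=0$. Here enters the key point and the only departure from Federer's argument: since $\zeta\ll\mu$, the $\mu$-null set $A\setminus\bigcup_i S_i$ is also $\zeta$-null, so $\zeta\big(A\setminus\bigcup_i S_i\big)=0$, with no regularity assumption on $\zeta$. Combining subadditivity of $\zeta$, the defining inequality on each $S_i$, the disjointness of the $\mu$-measurable sets $S_i$, and their inclusion in $U$, I would estimate
\[
\zeta(A)\le\zeta\Big(\bigcup_i S_i\Big)+\zeta\Big(A\setminus\bigcup_i S_i\Big)\le\sum_i\zeta(S_i)\le\alpha\sum_i\mu(S_i)=\alpha\,\mu\Big(\bigcup_i S_i\Big)\le\alpha\,\mu(U)\le\alpha\,\big(\mu(A)+\ep\big).
\]
Letting $\ep\to0^+$ yields $\zeta(A)\le\alpha\,\mu(A)$.

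I expect the main obstacle to be the control of the total measure $\sum_i\mu(S_i)$ of the covering sets, which a priori may exceed $\mu(A)$ since the $S_i$ protrude beyond $A$; this is handled by confining the cover inside the neighbourhood $U$ of controlled measure, and therefore rests on the outer regularity of $\mu$. The disjointness together with the $\mu$-measurability of the Vitali sets (closed balls in the applications) is what turns $\sum_i\mu(S_i)$ into $\mu\big(\bigcup_i S_i\big)$, while the absolute continuity is precisely what disposes of the uncovered remainder.
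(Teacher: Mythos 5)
Your proof is correct and follows exactly the route the paper intends: the paper offers no written proof, merely citing Lemma~2.9.3 of Federer and noting that Borel regularity of $\zeta$ is to be replaced by absolute continuity with respect to $\mu$, and your argument is precisely that covering proof with the absolute continuity invoked to annihilate the uncovered remainder $A\setminus\bigcup_i S_i$. The auxiliary steps (outer regularity of $\mu$ by open sets, which holds since $X$ is separable and $\mu$ is Borel regular and finite on bounded sets, and the $\mu$-measurability of the Vitali sets needed to convert $\sum_i\mu(S_i)$ into $\mu\big(\bigcup_i S_i\big)$) are all available under the paper's standing assumptions.
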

The next version of the metric area formula uses the more manageable notion of metric Jacobian $J_f$,
hence it requires some additional assumptions on $f$.
Since often one can compare this metric notion of Jacobian with the one related to the differential, 
this theorem can be thought of as a unified approach to the area formula.
%
%
%
%
%
%
%
\begin{The}[Area formula II]\label{metarea}
Let $f:E\lra Y$ be continuous and assume that the pull-back $f^*\nu$ is finite on bounded
sets and absolutely continuous with respect to $\mu$.
If $A\subset E$ is $\mu$-measurable and there exist 
disjoint $\mu$-measurable sets $\{E_i\}_{i\in\N}$ such that
\[
\mu\Big(E\sm\bigcup_{i\in\N}E_i\Big)=0,
\]
$f_{|E_i}$ is injective for every $i\ge1$ and $J_f(x)=0$ for $\mu$-a.e. $x\in E_0$,
then we have
\begin{equation}\label{areametric}
\int_A J_f(x) \, d\mu(x)=\int_Y N(f,A,y)\, d\nu(y)\,.
\end{equation}
\end{The}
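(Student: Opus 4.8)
The plan is to deduce Area formula II from Area formula I (Theorem~\ref{metareaI}) by comparing the two metric Jacobians on the sets $E_i$. First I would introduce the auxiliary set function $\zeta(S)=\nu\big(f(S\cap E)\big)$ for $S\subset X$ (replacing each $E_i$ by $E_i\cap E$, I may assume $E_i\subset E$). Since $f\big((\bigcup_jS_j)\cap E\big)=\bigcup_jf(S_j\cap E)$ and $\nu$ is subadditive, $\zeta$ is a measure in the sense of \cite{Fed}; moreover $\nu(f(T))\le\int_YN(f,T,y)\,d\nu(y)=f^*\nu(T)$ for Borel $T\subset E$ because $N(f,T,\cdot)\ge1$ on $f(T)$, and by Borel regularity of $f^*\nu$ this propagates to $\zeta\le f^*\nu$ on every subset. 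Hence $\zeta$ is finite on bounded sets and absolutely continuous with respect to $\mu$, and by definition $J_f(x)=(V)\limsup_{S\to x}\zeta(S)/\mu(S)$. Since \eqref{extends} yields $\int_YN(f,A,y)\,d\nu(y)=f^*\nu(A)$, proving \eqref{areametric} amounts to proving $\int_AJ_f\,d\mu=f^*\nu(A)$, and I would establish this along the decomposition $A=(A\cap E_0)\cup\bigcup_{i\ge1}(A\cap E_i)$, which splits both sides additively up to the $\mu$-null (hence $f^*\nu$-null) remainder.

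On $E_0$ the integral $\int_{A\cap E_0}J_f\,d\mu$ vanishes because $J_f=0$ $\mu$-a.e.\ there, so it remains to see $f^*\nu(A\cap E_0)=0$. At $\mu$-a.e.\ point of $A\cap E_0$ the quantity $(V)\liminf_{S\to x}\zeta(S)/\mu(S)$ equals $0$, hence lies below any $\alpha>0$; applying Lemma~\ref{differentationlemma} to $\zeta$ gives $\zeta(A\cap E_0)\le\alpha\,\mu(A\cap E_0)$ for every $\alpha>0$, so $\nu\big(f(A\cap E_0)\big)=\zeta(A\cap E_0)=0$. Since $N(f,A\cap E_0,\cdot)$ is supported on the $\nu$-null set $f(A\cap E_0)$, formula \eqref{extends} forces $f^*\nu(A\cap E_0)=\int_YN(f,A\cap E_0,y)\,d\nu(y)=0$.

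On each $E_i$ with $i\ge1$ I would exploit injectivity: for $\mu$-measurable $B\subset E_i$ one has $N(f,B,\cdot)\le1$, so \eqref{extends} gives $f^*\nu(B)=\nu(f(B))=\zeta(B)$, i.e.\ $\zeta$ and $f^*\nu$ agree on measurable subsets of $E_i$. To compare the Jacobians I would show $J_f=Jf$ $\mu$-a.e.\ on $E_i$ by a squeeze (both densities being $\mu$-measurable by the density theory of \cite{Fed}). For rationals $\alpha<\beta$ put $B=\{x\in E_i:\,J_f(x)<\alpha<\beta\le Jf(x)\}$ and intersect it with a fixed ball so that $\mu(B)<\infty$. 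On $B$ the $(V)\limsup$, hence the $(V)\liminf$, of $\zeta(S)/\mu(S)$ is $<\alpha$, so Lemma~\ref{differentationlemma} gives $f^*\nu(B)=\zeta(B)\le\alpha\,\mu(B)$, whereas Theorem~\ref{metareaI} gives $f^*\nu(B)=\int_BJf\,d\mu\ge\beta\,\mu(B)$; as $\beta>\alpha$ this forces $\mu(B)=0$. Taking the countable union over all such $\alpha,\beta$ and over an exhausting sequence of balls yields $J_f=Jf$ $\mu$-a.e.\ on $E_i$, whence $\int_{A\cap E_i}J_f\,d\mu=\int_{A\cap E_i}Jf\,d\mu=f^*\nu(A\cap E_i)$ by Theorem~\ref{metareaI}.

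Summing the contribution of $E_0$ and of all the $E_i$ then gives $\int_AJ_f\,d\mu=f^*\nu(A)=\int_YN(f,A,y)\,d\nu(y)$, which is \eqref{areametric}. I expect the main obstacle to be precisely this comparison on the injective pieces: the Jacobian $J_f$ is the density of the merely subadditive set function $\zeta=\nu(f(\cdot\cap E))$, which need not be Borel regular, so Federer's differentiation theorem underlying Theorem~\ref{metareaI} does not apply to it directly. The remedy is the one-sided estimate of Lemma~\ref{differentationlemma}; the two-sided control needed to identify $J_f$ with $Jf$ emerges only after pairing that upper bound with the matching lower bound furnished by Theorem~\ref{metareaI}, which is why both ingredients are genuinely used.
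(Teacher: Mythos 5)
Your overall skeleton is sound and several pieces match the paper exactly: the reduction to showing $\int_A J_f\,d\mu = f^*\nu(A)$ via \eqref{extends}, the inequality $\zeta\le f^*\nu$ (hence $J_f\le Jf$ pointwise), and the treatment of $E_0$ (killing $f^*\nu(A\cap E_0)$ by applying Lemma~\ref{differentationlemma} to $\zeta=\nu\big(f(\cdot\cap E)\big)$) is essentially the paper's own argument. The genuine gap is in the comparison $J_f=Jf$ $\mu$-a.e.\ on the injective pieces $E_i$. Your squeeze applies Lemma~\ref{differentationlemma} and Theorem~\ref{metareaI} to the set $B=\{x\in E_i: J_f(x)<\alpha<\beta\le Jf(x)\}$, and both of these results require $B$ to be $\mu$-measurable. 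You assert that $J_f$ is $\mu$-measurable ``by the density theory of \cite{Fed}'', but that theory delivers measurability of $(V)$-densities only for Borel regular measures; your $\zeta$ is merely a monotone countably subadditive set function --- it is not Borel regular and not even additive on disjoint sets where $f$ fails to be globally injective --- so no measurability of its upper density $J_f$ is available from the cited results. Worse, the a.e.\ measurability of $J_f$ on $\bigcup_{i\ge1}E_i$ is essentially a consequence of the identity $J_f=Jf$ a.e.\ that the squeeze is supposed to prove, so the argument is circular at exactly the delicate point (note that even making sense of the left-hand side of \eqref{areametric} requires this measurability, so it cannot be taken for granted).

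The paper avoids this by a purely pointwise sandwich that never needs $J_f$ to be measurable: it approximates $E_i$ from inside by closed sets $C_i$ with $\mu(E_i\sm C_i)\le\ep 2^{-i}$, and uses Corollary~2.9.9 of \cite{Fed} applied to ${\bf 1}_{C_i}$ and ${\bf 1}_{C_i}Jf$ to show that at $\mu$-a.e.\ $x\in C_i$ one has $(V)\limsup_{S\to x}f^*\nu(S\sm C_i)/\mu(S)=0$; combined with $\nu\big(f(S\cap C_i)\big)=f^*\nu(S\cap C_i)$ (injectivity plus \eqref{extends}) this yields $J_f(x)\le Jf(x)\le J_{f|_{C_i}}(x)\le J_f(x)$ at each such point, and letting $\ep\to0$ gives $J_f=Jf$ a.e.\ on $\bigcup_{i\ge1}E_i$. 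If you want to keep your squeeze, you must first supply an independent proof that $J_f$ is $\mu$-measurable, or reprove Lemma~\ref{differentationlemma} for arbitrary (possibly non-measurable) sets and then find a matching lower bound on such sets, which Theorem~\ref{metareaI} does not provide; the closed-set/Lebesgue-point route is the standard way around this obstruction.
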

\begin{proof}
We can assume that any $E_i$ is contained in $E$. 
Let us fix $\ep>0$ and consider a sequence of closed sets
$C_i\subset E_i$ such that $\mu(E_i\sm C_i)\leq\ep 2^{-i}$ for any $i\in\N$.
Let us set $f_i=f_{|C_i}$ and notice that for all $x\in C_i$ we have
\[
J_{f_i}(x)=(V)\limsup_{S\to x}\frac{\nu\big(f(S\cap C_i)\big)}{\mu(S)}\leq(V)\limsup_{S\to x}
\frac{\nu\big(f(S\cap E)\big)}{\mu(S)}=J_f(x).
\]
By Corollary~2.9.9 of \cite{Fed} applied to both ${\bf 1}_{C_i}$ and ${\bf 1}_{C_i}\,Jf$, 
it follows that for $\mu$-a.e. $x\in C_i$, we have
\begin{eqnarray}
&& (V) \lim_{S\to x} \frac{1}{\mu(S)}\int_{S} {\bf 1}_{C_i}(z)\,{\bf D}(f^*\nu,\mu,V,z)\,d\mu(z)=Jf(x)\,, \label{VlimL1} \\
&& (V) \lim_{S\to x} \frac{1}{\mu(S)}\int_{S} {\bf D}(f^*\nu,\mu,V,z)\,d\mu(z)=Jf(x)\,. \label{VlimL2}
\end{eqnarray}
Now, for all $x\in C_i$ such that \eqref{VlimL1} and \eqref{VlimL2} hold, we have
\begin{eqnarray*}
J_f(x)&=&(V)\limsup_{S\to x}\frac{\nu\big(f(S\cap E)\big)}{\mu(S)} \\
&\leq&(V)\limsup_{S\to x}\frac{f^*\nu(S\cap E)}{\mu(S)}=Jf(x) \\
&\leq&(V)\limsup_{S\to x}\frac{\nu\big(f(S\cap E\cap C_i)\big)}{\mu(S)}
+(V)\limsup_{S\to x}\frac{f^*\nu(S\cap E\sm C_i)}{\mu(S)} \\
&\leq&(V)\limsup_{S\to x}\frac{\nu\big(f_i(S\cap C_i)\big)}{\mu(S)}
+(V)\limsup_{S\to x}\frac{f^*\nu(S\sm C_i)}{\mu(S)} \\
&=&(V)\limsup_{S\to x}\frac{\nu\big(f_i(S\cap C_i)\big)}{\mu(S)}\,.
\end{eqnarray*}
The last equality follows by both \eqref{VlimL1} and \eqref{VlimL2}, hence we get
$J_f(x)=Jf(x)=J_{f_i}(x)$. These equalities hold a.e. in $C_i$ for any $i\geq1$.
Let $B_1=\cup_{i=1}^\infty C_i$ and let $A_1=\cup_{i=1}^\infty E_i$.
Then we have $\mu(A_1\sm B_1)\leq \ep$, where we have shown that
the previous equalities of metric Jacobians hold $\mu$-a.e. in $B_1$. 
The arbitrary choice of $\ep$ allows for constructing an increasing sequence of
Borel sets $B_i\subset A_1$ such that $\mu(A_1\sm B_n)\leq \ep/n$ for all $n\geq1$.
In particular, setting $B_\infty=\cup_{n=1}^\infty B_n$, we have that
\[
\mu(A_1\sm B_n)\searrow\mu(A_1\sm B_\infty)
\]
as $n\to\infty$ and this limit is zero. 
Thus, in view of formula \eqref{areametricI}, we get
\begin{eqnarray*}
f^*\nu(A\cap A_1)=\int_{A\cap A_1}Jf(x)\,d\mu(x)=\int_{A\cap A_1}J_f(x)\,d\mu(x)\,. 
\end{eqnarray*}
We have obtained the formula
\begin{eqnarray}\label{prearea}
f^*\nu(A)&=&\int_{A\cap A_1}J_f(x)\,d\mu(x)+f^*\nu(A\cap E_0)\,.
\end{eqnarray}
We have to show that $f^*\nu(A\cap E_0)=0$.
Let us consider for any $Z\subset X$ the ``preimage measure'' $f^\sharp\nu(Z)=\nu(f(Z))$
that is absolutely continuous with respect to $\mu$.
Since the set where $J_f>0$ in $E_0$ is $\mu$-negligible and $f^*\nu$ is absolutely
continuous with respect to $\mu$, it is not restrictive to assume that 
$J_f$ everywhere vanishes on $E_0$. Now, for every $\epsilon>0$ and every $\mu$-measurable
bounded set $F\subset E_0$, we get $f^\sharp\nu(F)\leq\epsilon\mu(F)$, due to
Lemma~\ref{differentationlemma} applied with $\zeta=f^\sharp\nu$. 
This clearly implies $f^\sharp\nu(E_0)=\nu\big(f(E_0)\big)=0$, hence \eqref{extends} gives $f^*\nu(E_0)=0$.
Then \eqref{prearea} easily guides us to the conclusion.
\end{proof}
%
%
%
%
%
%
%
%
%
%
%
%
\begin{Rem}\rm
The metric area formulae \eqref{areametricI} and \eqref{areametric} 
can be extended to all nonnegative measurable mappings $u:A\lra[0,+\infty]$,
obtaining 
\begin{equation}
\int_A\,u(x)\,J_f(x)\,\,d\mu(x)= \int_Y\sum_{x\in
f^{-1}(y)}\!\!\!\!\!u(x)\,\,d\nu(y)\,.
\end{equation}
This follows by standard approximation arguments with measurable step functions.
\end{Rem}
%
%
%
%
%
\begin{Rem}\rm
Let $(X,d)$ be a complete and separable metric space, let $\alpha>0$,
let $(Y,\rho)$ be a metric space
and consider the metric measure spaces $(X,d,\cH^\alpha_d)$ and $(Y,\rho,\cH^\alpha_\rho)$.
Let $E\subset X$ be closed and let $f:E\lra Y$ be a Lipschitz mapping.
We assume that
\begin{enumerate}
\item
$\cH^\alpha_d$ is finite on bounded sets of $X$,
\item
for $\cH_d^\alpha$-a.e. $x\in X$ the inequality
$\displaystyle \liminf_{r\to0^+}\frac{\cH^\alpha(D_{x,r})}{r^\alpha}>0$ holds.
\end{enumerate}
These conditions easily imply that $C=\{(x,D_{x,r}): x\in X, \, r>0\}$ 
is $\cH^\alpha_d$ Vitali relation, hence the areae formulae of Theorem~\ref{metareaI} and 
Theorem~\ref{metarea}
hold for $f$, where $\mu=\cH^\alpha_d$ and $\nu=\cH^\alpha_\rho$. It follows that 
the metric area formula of \cite{MagPhD} follows as a special case of our Theorem~\ref{metarea}.
\end{Rem}
\begin{Rem}\rm
Let $E$ be a closed subset of $\R^n$, let $(Y,\rho)$ be a metric space equipped with the 
masure $\cH^n_\rho$ and let $f:E\lra Y$ be Lipschitz. The conditions of the previous remark are clearly
satisfied with the Lebesgue measure $\cL^n$ on $\R^n$. 
Then our area formulae hold, along with that of \cite{AmbKir} and \cite{Kir}.
In particular, all the different notions of metric Jacobian that are 
involved in these formulae coincide $\cL^n$-a.e.
\end{Rem}

\end{document}